\DeclareMathAlphabet{\mathcal}{OMS}{cmsy}{m}{n}
\newtheoremstyle{theorem}
{6pt +1\p@ -2.0\p@}
{6pt +1\p@ -2.0\p@}
{\it}			      
{}				  
{\bfseries}   
{.}               
{.4em}       
{}               
\theoremstyle{theorem}
\newtheorem{theorem}{Theorem}
\newtheorem{definition}{Definition}
\newtheorem{proposition}{Proposition}
\newtheorem{corollary}{Corollary}
\newtheorem{lemma}{Lemma}
\newtheorem{remark}{Remark}
\newcommand{\R}{\mathbb R}
\newcommand{\Rn}{{\R^n}}
\newcommand{\N}{\mathbb N}
\newcommand{\Z}{\mathbb Z}
\newcommand{\C}{\mathbb C}
\newcommand{\fM}{\mathfrak{M}}
\newcommand{\fC}{\mathfrak{C}}
\newcommand{\Ga}{\alpha}
\newcommand{\supp}{\text{supp}\,}
\begin{document}
	
\title{Necessary cancellation conditions for the boundedness of operators on local Hardy spaces}

	\author{Galia Dafni}
\address{Department of Mathematics and Statistics, Concordia University, Montreal, QC, H3G 1M8, Canada}
\email{galia.dafni@concordia.ca}

\author{Chun Ho Lau}
\address{Department of Mathematics and Statistics, Concordia University, Montreal, QC, H3G 1M8, Canada}
\email{chunho.lau@concordia.ca}

\author {Tiago Picon}
\address{Departamento de Computa\c{c}\~ao e Matem\'atica, Universidade S\~ao Paulo, Ribeir\~ao Preto, SP, 14040-901, Brasil}
\email{picon@ffclrp.usp.br}

\author {Claudio Vasconcelos}
\address{Departamento de Matem\'atica, Universidade Federal de S\~ao Carlos, S\~ao Carlos, SP, 13565-905, Brasil}
\email{claudio.vasconcelos@estudante.ufscar.br}

\thanks{The first and second author were partially supported by the Natural Sciences and Engineering Research Council (NSERC) of Canada and the Centre de Recherches Math\'{e}matiques (CRM). The third author was supported by Conselho Nacional de Desenvolvimento Cient\'ifico e Tecnol\'ogico (CNPq - grant 311430/2018-0) and Funda\c{c}\~ao de Amparo \`a  Pesquisa do Estado de S\~ao Paulo (FAPESP - grant 18/15484-7). The fourth author was supported by Coordena\c{c}\~ao de Aperfei\c{c}oamento de Pessoal de N\'ivel Superior (CAPES), the Fonds de Recherche du Qu\'ebec - Nature and Technologies (FRQNT) and MITACS Globalink}

\subjclass[2000]{42B30, {42B20}, 35S05}

\keywords{{Hardy spaces, molecules, moment conditions, Hardy's inequality, inhomogeneous Calder\'on--Zygmund operators, pseudodifferential operators}}

\begin{abstract}
		In this work we present necessary cancellation conditions for the continuity of linear operators in $h^p(\Rn)$, $0<p\leq 1$, that map atoms into pseudo-molecules. Our necessary condition, expressed in terms of the $T^{\ast}$ condition, is the same as the  one recently proved  sufficient in \cite{DLPV}, thus providing a necessary and sufficient cancellation condition for the boundedness of inhomogeneous Calder\'on--Zygmund type operators.
\end{abstract}

\maketitle

\section{Introduction}

When studying the boundedness of an operator $T$ on the Hardy space $H^p(\Rn)$, $0 < p \leq 1$, one needs to guarantee that the image of an $H^p$ distribution under $T$ satisfies the cancellation conditions required of elements of $H^p(\Rn)$, namely vanishing moments of all orders $\alpha$ with  $|\alpha| \leq n(1/p-1)$. 
When $T$ is a classical Calder\'on-Zygmund operator, necessary and sufficient cancellation condition for the boundedness were presented in \cite[Proposition 4 p. 23]{MeyerCoifman1997} and expressed in terms of $T^{\ast}(x^{\alpha})$. In the case $p = 1$, for example, one must have $\int Tf = 0$ for $f \in H^1(\Rn)$, or, as commonly stated, $T^*(1) = 0$.  
 
What can be said about the cancellation conditions for bounded operators on the local Hardy spaces $h^p(\Rn)$ introduced by Goldberg \cite{Goldberg1979}? Unlike the case of $H^p(\Rn)$, the elements of $h^p(\Rn)$ are not required to satisfy exact (homogeneous) global cancellation conditions in the form of vanishing moments. Nevertheless, there is an underlying {\em local} or {\em nonhomogeneous} cancellation. Goldberg exhibits an atomic decomposition for $h^p(\Rn)$, analogous to that of $H^p(\Rn)$, with the difference that only atoms supported in small balls are required to have vanishing moments.  Conversely, as we will show in Proposition~\ref{lemma:moment-log-control}, for $h^p(\Rn)$ distributions with compact support, it is possible to control the moments in terms of the norm and the size of the support.  In particular, when $n({1}/{p} - 1)$ is an integer, the highest order moments must decay logarithmically with the size of the support.   
Looking again at the special case $p = 1$, this means that a function $g$ in $h^1(\Rn)$ with support in a ball of radius $r < 1$ must satisfy
\begin{equation}
\label{eq:p=1}
\left|\int g\right| \leq C \bigg[\log \left(1+\frac{1}{r}\right)\bigg]^{-1} \|g\|_{h^1}.
\end{equation}

Since a bounded operator $T$ on $h^{p}(\Rn)$ does not, in general, preserve compact support, a natural question arises concerning the cancelation conditions for $Ta$ where $a$ is an atom.  When  proving boundedness on Hardy spaces $H^{p}(\Rn)$, the usual method is to show that $T$ maps atoms into molecules (see \cite{Taibleson-Weiss, RodolfoTorresBook}), as in the context of Calder\'{o}n-Zygmund operators (see \cite[Proposition 4]{MeyerCoifman1997}). In this particular setting, the notion of molecules is motivated by  the behavior of the kernel associated to the operator. 
For the homogeneous Triebel--Lizorkin spaces $\dot{F}^{\alpha,q}_p(\Rn)$, $1<p<\infty$,  \cite[Theorem 1.16]{Frazier-Jawerth-Han-Weiss} asserts that if a continuous operator maps smooth atoms into a special type of smooth molecules, then the kernel of this operator satisfies Calder\'on--Zygmund estimates. The case $\dot{F}^{0,2}_1(\Rn) = H^1(\Rn)$ is addressed in \cite[Remark (ii) p.\,180]{Frazier-Jawerth-Han-Weiss}, which points out that the cancellation condition $\int Ta(x)dx = 0$, corresponding to $T^*(1) = 0$, guarantees that such an operator maps smooth atoms into  smooth molecules.

In recent work \cite{DLPV}, we presented atomic and molecular decompositions for $h^p(\Rn)$ in which vanishing moments were replaced by {\em approximate} cancellation conditions. In particular, we used these decompositions to prove the boundedness on $h^p(\Rn)$, for all $0 < p \leq 1$, of operators known as inhomogeneous Calder\'on--Zygmund operators (see \cite{DLPV, DingHanZhu2020, RodolfoTorresBook}), and their strongly singular versions.
As part of the sufficient conditions for such an operator to be bounded  from $h^p(\Rn)$ to itself, we imposed local Campanato-type cancellation conditions  $T$.

Our main theorem in the present paper shows that the  local Campanato-type cancellation conditions are also necessary for the boundedness of operators in these classes. More generally, this can be stated for operators that map atoms into what we call pseudo-molecules (see Definition~\ref{pseudo-molecule}).

\begin{theorem} \label{theorem:converse}
	Let $0<p\leq 1$ and $T$ be a linear and bounded operator on $h^p(\R^n)$ that maps each $(p,2)$ atom in $h^p(\Rn)$ into a pseudo-molecule centered in the same ball as the support of the atom.  Then the following cancellation conditions must hold:
	\begin{quote}
For any ball $B=B(x_0,r) \subset \R^n$ with $r<1$ and $\alpha \in \Z_{+}^{n}$ such that $|\alpha|\leq N_p:= \lfloor \gamma_p \rfloor$, $\gamma_p := n(\frac{1}{p} - 1)$,
	\begin{equation} \label{inhomogeneous-condition}
		f= T^*[(\cdot-x_0)^{\alpha}] \quad \mbox{ satisfies } \quad \left(\fint_{B}|f(y)-P^{N_p}_{B}(f)(y)|^2dy\right)^{1/2}\leq C \, \Psi_{p,\alpha}(r),
	\end{equation}
	where $P_{B}^{N_p}(f)$ is the  polynomial of degree less then or equal to $N_p$ that has the same moments as $f$ over $B$ up to order $N_p$, and 
	$$
	\Psi_{p,\alpha}(t) :=\left\{ \begin{array}{ll} 
		t^{\gamma_p} &\quad \text{if } |\alpha|< \gamma_p, \\
		t^{\gamma_p} \bigg[ \log\bigg(1+\frac{1}{t}\bigg)\bigg]^{-1/p} &\quad \text{if } |\alpha| = \gamma_p = N_p.
	\end{array} \right.
	$$
\end{quote}
\end{theorem}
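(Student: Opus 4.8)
The argument is based on duality. Fix a ball $B=B(x_0,r)$ with $r<1$ and a multi-index $\alpha$ with $|\alpha|\le N_p$, and set $f=T^{*}[(\cdot-x_0)^{\alpha}]$. Since $T$ sends $(p,2)$-atoms to pseudo-molecules, which decay fast enough at infinity, the pairing $\langle f,a\rangle:=\langle(\cdot-x_0)^{\alpha},Ta\rangle=\int_{\Rn}(y-x_0)^{\alpha}(Ta)(y)\,dy$ is well defined for every $(p,2)$-atom $a$ (and likewise for monomials restricted to $B$), which is how the left-hand side of \eqref{inhomogeneous-condition} is to be understood. Because $r<1$, any $(p,2)$-atom $a$ supported in $B$ has vanishing moments up to order $N_p$, hence annihilates $P^{N_p}_{B}(f)$, so $\langle f,a\rangle=\langle f-P^{N_p}_{B}(f),a\rangle$. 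Since $f-P^{N_p}_{B}(f)$ is, by construction, orthogonal in $L^{2}(B)$ to all polynomials of degree at most $N_p$, its $L^{2}(B)$-norm is attained, after normalization, at the atom proportional to $\big(f-P^{N_p}_{B}(f)\big)\mathbf 1_{B}$; a short computation (using $|B|^{1/p-1}=c_{n}r^{\gamma_p}$) then gives
\[
\left(\fint_{B}|f-P^{N_p}_{B}(f)|^{2}\right)^{1/2}=c_{n}\,r^{\gamma_p}\,\sup\Big\{\,|\langle f,a\rangle|:\ a\ \text{a}\ (p,2)\text{-atom supported in}\ B\,\Big\}.
\]
Consequently \eqref{inhomogeneous-condition} is \emph{equivalent} to the estimate
\[
\left|\int_{\Rn}(y-x_0)^{\alpha}(Ta)(y)\,dy\right|\le C\,r^{-\gamma_p}\Psi_{p,\alpha}(r)
\]
holding uniformly over such $B$ and atoms $a$, with $C$ depending only on $n$, $p$ and $\|T\|_{h^{p}\to h^{p}}$; that is, the left-hand side is $\lesssim 1$ when $|\alpha|<\gamma_p$ and $\lesssim[\log(1+1/r)]^{-1/p}$ when $|\alpha|=\gamma_p=N_p$.

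To prove this, fix $a$ and set $M=Ta$, a pseudo-molecule centered at $x_0$ and associated to $B$, all of whose pseudo-molecular constants (and $\|M\|_{h^{p}}$) are $\lesssim\|T\|_{h^{p}\to h^{p}}$. Using the (pseudo-)molecular decomposition of \cite{DLPV}, write $M=\sum_{j}\lambda_{j}b_{j}$ in $h^{p}$, where each $b_{j}$ is a $(p,2)$-atom, those supported in balls of radius $<1$ carry vanishing moments up to order $N_p$, and the coefficients $\{\lambda_{j}\}$ reflect the decay of $M$ about $x_0$. The key observation is that $|\alpha|\le N_p$: the binomial expansion of $(y-x_0)^{\alpha}$ about the center of $b_{j}$ involves only monomials of degree $\le N_p$, so $\int_{\Rn}(y-x_0)^{\alpha}b_{j}(y)\,dy=0$ for every $b_{j}$ supported in a ball of radius $<1$, no matter where that ball is centered. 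Only the atoms supported in balls of radius $\ge 1$ survive, and for each of these Cauchy--Schwarz, the atomic size bound, and the decay of $\lambda_{j}$ about $x_0$ give a contribution $\lesssim 1$. This settles the case $|\alpha|<\gamma_p$.

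The case $|\alpha|=\gamma_p=N_p$ is the technical heart, where one must extract the sharp logarithmic rate rather than a bare $O(1)$ bound, and it is here that Proposition~\ref{lemma:moment-log-control} enters. When $1/4\le r<1$ the target is a constant and the bound follows as above, so assume $r<1/4$. One localizes $M$ at scale $r$: choose $\chi\in\ccinf$ with $\chi\equiv1$ on $2B$, $\supp\chi\subset 4B$ and $|\partial^{k}\chi|\lesssim r^{-|k|}$. The pseudo-molecular size estimate yields $M\chi\in h^{p}$, supported in $4B$ (a ball of radius $4r<1$), with $\|M\chi\|_{h^{p}}\lesssim 1$; applying Proposition~\ref{lemma:moment-log-control} to $M\chi$ then gives $\big|\int_{\Rn}(y-x_0)^{\alpha}M\chi\big|\lesssim r^{\gamma_p}[\log(1+1/r)]^{-1/p}\le[\log(1+1/r)]^{-1/p}$. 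For the complementary part $M(1-\chi)$, supported in $\{|y-x_0|\ge 2r\}$, one sums over the dyadic annuli $\{2^{k}r\le|y-x_0|<2^{k+1}r\}$, $k\ge1$, where the pseudo-molecular size and approximate cancellation estimates --- the very conditions that, through Proposition~\ref{lemma:moment-log-control} applied scale by scale, force the top-order moments of a compactly supported element of $h^{p}$ to decay logarithmically in the radius of its support, as in \eqref{eq:p=1} --- supply the remaining factor $[\log(1+1/r)]^{-1/p}$. This last step is the main obstacle: one must make sense of the moment pairing $\langle(\cdot-x_0)^{\alpha},M\rangle$, which is not absolutely convergent in general, and show that beyond scale $r$ the cancellation built into a pseudo-molecule produces not an $O(1)$ contribution but one that is $[\log(1+1/r)]^{-1/p}$-small, thereby preserving the logarithmic gain coming from the part near $x_0$.
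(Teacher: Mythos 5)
Your duality reduction is exactly the paper's: identifying $T^*[(\cdot-x_0)^{\alpha}]$ with a functional on $L^2_{N_p}(B)$, rescaling $\psi$ into a $(p,2)$ atom, and converting the $|B|^{1/p-1}$ factor into $r^{\gamma_p}$ is precisely how the paper proceeds, and that part of your argument is fine. The gap is in the core estimate, namely that for an atom $a$ associated to $B(x_0,r)$, $r<1$, the moment $\langle(\cdot-x_0)^{\alpha},Ta\rangle$ is $\lesssim 1$ for $|\alpha|<\gamma_p$ and $\lesssim[\log(1+1/r)]^{-1/p}$ in the critical case. Here you argue as if $Ta$ were a molecule in the sense of \cite{DLPV}: you invoke a "pseudo-molecular size estimate," a cutoff localization with $\|M\chi\|_{h^p}\lesssim 1$, coefficients $\lambda_j$ that "decay about $x_0$," and "approximate cancellation estimates" on dyadic annuli. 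But a pseudo-molecule in this paper (Definition~\ref{pseudo-molecule}) carries \emph{no} size or decay conditions whatsoever; it is only a sum $g+h$ with $g\in h^p$ supported in $B$ and $h\in H^p$, with controlled norms. None of the structure your annulus-by-annulus argument needs is available, and you yourself flag the decisive step --- that the far part contributes $[\log(1+1/r)]^{-1/p}$ rather than $O(1)$, and that the pairing with $(\cdot-x_0)^{\alpha}$ even makes sense --- as "the main obstacle" without closing it. The same issue already affects your case $|\alpha|<\gamma_p$: pairing a non-Schwartz monomial term by term with an atomic series convergent only in $h^p$ requires justification, and the surviving large-ball terms are not bounded without decay of the coefficients, which a pseudo-molecule does not provide.

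The paper closes exactly this gap by exploiting the definition of pseudo-molecule rather than any decay: writing $Ta=g+h$, the $H^p$ component $h$ has \emph{exactly} vanishing moments up to order $N_p$ (the pairing being the $H^p$--$\dot{\Lambda}_{n(1/p-1)}$ duality), so $\langle(\cdot-x_0)^{\alpha},Ta\rangle=\langle(\cdot-x_0)^{\alpha},g\rangle$ with $g$ a compactly supported $h^p$ distribution; this also settles well-definedness of the moment. The logarithmic gain then comes from Proposition~\ref{lemma:moment-log-control} (packaged as Proposition~\ref{proposition:pseudo-molecule-moment-estimate}), whose proof is the real mechanism you are missing: one builds test functions $\phi^{x,\alpha}$ in the grand maximal family that equal $c_\alpha y^{\alpha}$ on $\supp g$, so that $m_{\mathcal{F}_k}(g)(x)\gtrsim|x-x_0|^{-n-|\alpha|}|\langle g,(\cdot-x_0)^{\alpha}\rangle|$, and then integrates the $p$-th power over $\frac r2<|x-x_0|<\frac{r+1}{2}$; when $|\alpha|=\gamma_p$ the exponent $p(n+|\alpha|)$ equals $n$ and the integral produces the factor $\log(1+\frac1r)$, which, against $\|g\|^p_{h^p}\lesssim 1$, yields the $[\log(1+1/r)]^{-1/p}$ bound. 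To repair your proof you should replace the localization/annuli argument for $M=Ta$ by this decomposition-plus-grand-maximal-function argument (or prove an equivalent of Proposition~\ref{lemma:moment-log-control}); as written, the critical case of \eqref{inhomogeneous-condition} is asserted, not proved.
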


Since a special case of pseudo-molecules are pre-molecules (see Definition~\ref{definition:pre-molecule}, Lemma~\ref{example:premolecule}), we obtain the following $T^{*}$ characterization result, in the spirit of  \cite[Proposition 4]{MeyerCoifman1997}.

\begin{corollary} 
\label{corollary:converse-ICZOp}
Let $0<p\leq 1$ and consider a linear and continuous operator $T:\mathcal{S}'(\Rn) \rightarrow \mathcal{S}'(\Rn)$. Suppose $T$ maps each $(p,2)$ atom in $h^p(\Rn)$ into a pre-molecule centered in the same ball as the support of the atom.  Then the cancellation conditions \eqref{inhomogeneous-condition} hold if and only if $T$ is bounded on $h^{p}(\Rn)$.
\end{corollary}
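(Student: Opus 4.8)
The plan is to prove Theorem~\ref{theorem:converse}, since the Corollary follows almost immediately: a continuous operator $T:\mathcal S'\to\mathcal S'$ mapping atoms into pre-molecules centered in the same ball is, by the sufficiency direction proved in \cite{DLPV} (using the atomic/molecular decomposition of $h^p$), bounded on $h^p(\Rn)$ precisely when a cancellation hypothesis of the type \eqref{inhomogeneous-condition} holds; Theorem~\ref{theorem:converse} supplies the converse implication, and Lemma~\ref{example:premolecule} guarantees that pre-molecules are pseudo-molecules so that the theorem applies. So the real content is Theorem~\ref{theorem:converse}, whose proof I would organize around Proposition~\ref{lemma:moment-log-control} (the moment bound \eqref{eq:p=1} and its higher-order, general-$p$ analogue).

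First I would reduce the Campanato-type estimate in \eqref{inhomogeneous-condition} to a statement about moments. Fix a ball $B=B(x_0,r)$ with $r<1$ and $\alpha$ with $|\alpha|\le N_p$, and let $f=T^*[(\cdot-x_0)^\alpha]$. Testing $f$ against a polynomial amounts to testing $(\cdot-x_0)^\alpha$ against $Tg$ for suitable $g$; concretely, for a multi-index $\beta$ with $|\beta|\le N_p$, the quantity $\fint_B f(y)(y-x_0)^\beta\,dy$ equals $\langle T^*[(\cdot-x_0)^\alpha], \phi_\beta\rangle = \langle (\cdot-x_0)^\alpha, T\phi_\beta\rangle$ where $\phi_\beta = r^{-n}\mathbb 1_B(\cdot)(\cdot-x_0)^\beta$ is (a fixed multiple of) a $(p,2)$ atom associated to $B$ once we correct it to have the right normalization and, if $r$ is small, the right vanishing moments — this correction by a lower-degree polynomial is harmless because $P_B^{N_p}(f)$ is subtracted off on the left side. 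The key point is that $T\phi_\beta$ is, by hypothesis, a pseudo-molecule centered at $x_0$ with $h^p$-norm controlled by $\|T\|$, so $\langle(\cdot-x_0)^\alpha,T\phi_\beta\rangle$ is a moment of a compactly-controlled-tail $h^p$ object and we can hope to bound it.

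The heart of the argument is then to bound the moments of $T\phi_\beta$ against $(y-x_0)^\alpha$ by $\Psi_{p,\alpha}(r)$. For this I would use the structural description of pseudo-molecules (Definition~\ref{pseudo-molecule}): a pseudo-molecule centered at $x_0$ at scale $r$ splits into a part morally supported near $B$ and a rapidly decaying tail, with the near part itself decomposing into $h^p$-atoms at dyadic scales $2^k r$. On each such piece the moment of order $|\alpha|$ is controlled by Proposition~\ref{lemma:moment-log-control}: if $|\alpha|<\gamma_p$ one gets a clean power $\left(2^k r\right)^{\gamma_p + |\alpha|}$ type bound which sums geometrically to $O(r^{\gamma_p})$ (the extra $|\alpha|$ powers of $2^k r$ coming from expanding $(y-x_0)^\alpha$ on a ball of radius $\sim 2^kr$, absorbed since $|\alpha|\le N_p$ keeps exponents positive in the relevant regime), while if $|\alpha|=\gamma_p=N_p$ one is exactly at the borderline case where Proposition~\ref{lemma:moment-log-control} forces the logarithmic gain $[\log(1+1/(2^kr))]^{-1/p}$, and one must check that summing these logarithmic factors over the dyadic scales $k\ge 0$ still yields $O\bigl(r^{\gamma_p}[\log(1+1/r)]^{-1/p}\bigr)$ — i.e. the worst scale is $k=0$ and the tail does not destroy the gain. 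The decaying tail of the pseudo-molecule contributes a term that is, by the decay built into Definition~\ref{pseudo-molecule}, dominated by $r^{\gamma_p}$ (resp. with the log) as well, using $r<1$. Finally one reassembles: subtracting $P_B^{N_p}(f)$ exactly kills the moments of order $\le N_p$ except one is measuring the $L^2(\fint_B)$ oscillation, so by the standard equivalence (Campanato-type) the oscillation is controlled by the largest of these corrected moments, giving \eqref{inhomogeneous-condition}.

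The main obstacle I expect is the borderline case $|\alpha|=\gamma_p=N_p$: one must be careful that the logarithmic decay from Proposition~\ref{lemma:moment-log-control}, applied scale-by-scale, is not washed out when summing over the dyadic decomposition of the pseudo-molecule, and simultaneously that the tail estimate is compatible with a $[\log(1+1/r)]^{-1/p}$ (not merely $O(r^{\gamma_p})$) bound. A secondary technical point is the bookkeeping needed to realize $r^{-n}\mathbb 1_B(y)(y-x_0)^\beta$, minus an appropriate correcting polynomial, as a genuine $(p,2)$ atom (with the right support, size normalization, and — for $r$ small — vanishing moments up to order $N_p$) so that the hypothesis of the theorem applies to it with a uniform constant; this is routine but must be done uniformly in $r<1$.
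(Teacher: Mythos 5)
Your reduction of \eqref{inhomogeneous-condition} to the finitely many moments $\fint_B f(y)(y-x_0)^\beta\,dy$, $|\beta|\le N_p$, is the fatal step. Subtracting $P_B^{N_p}(f)$ makes precisely those moments vanish, so they carry no information about the quantity you need to bound: a function on $B$ orthogonal to $\mathcal{P}_{N_p}$ can have arbitrarily large $L^2(B)$ norm while all of your ``corrected moments'' are zero. There is no Campanato-type equivalence controlling the $L^2$ oscillation by finitely many moments, so your final step (``the oscillation is controlled by the largest of these corrected moments'') fails. The paper's proof of Theorem~\ref{theorem:converse} instead uses the duality $(L^{2}_{N_p}(B))^*\cong L^2(B)/\mathcal{P}_{N_p}$, i.e.\ $\|f-P_B^{N_p}(f)\|_{L^2(B)}=\sup|\langle f,\psi\rangle|$ over \emph{all} $\psi\in L^2(B)$ with vanishing moments up to order $N_p$ and $\|\psi\|_{L^2(B)}\le 1$. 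Each such $\psi$, multiplied by $|B|^{\frac12-\frac1p}$, is automatically a $(p,2)$ atom, so the hypothesis on $T$ applies to this whole family; then $\langle f,\psi\rangle=\langle(\cdot-x_0)^\alpha,\,T(\psi|B|^{\frac12-\frac1p})\rangle$ is a moment of a pseudo-molecule and Proposition~\ref{proposition:pseudo-molecule-moment-estimate} gives the bound uniformly in $\psi$, which \eqref{eq:duality} converts into \eqref{inhomogeneous-condition}. Your truncated monomials (which are not atoms until corrected, and which after correction only detect $P_B^{N_p}(f)$) cannot substitute for this infinite family of test functions.

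Two further points. First, Definition~\ref{pseudo-molecule} contains no decay or dyadic structure: a pseudo-molecule is simply $g+h$ with $g\in h^p$ supported in $B$, $h\in H^p$, and norms bounded by $\fC$. Since the $H^p$ part has exactly vanishing moments, the moments of $\fM$ are those of $g$, and Proposition~\ref{proposition:pseudo-molecule-moment-estimate} follows from a single application of Proposition~\ref{lemma:moment-log-control}; no summation over scales $2^k r$ occurs, so the borderline-logarithm bookkeeping you anticipate never arises (the dyadic atom decomposition enters only in Lemma~\ref{example:premolecule}, quoted from \cite{DLPV}, to show that pre-molecules are pseudo-molecules). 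Second, for the direction of the Corollary in which \eqref{inhomogeneous-condition} is assumed, you cannot simply invoke ``the sufficiency direction proved in \cite{DLPV}'', which is stated for inhomogeneous Calder\'on--Zygmund operators rather than for an abstract $T$ mapping atoms to pre-molecules: the paper's argument verifies that \eqref{inhomogeneous-condition} upgrades each pre-molecule $Ta$ to a genuine molecule in the sense of \cite{DLPV} (condition (M3)), and only then uses the uniform molecular bound together with the atomic decomposition and the continuity of $T$ on $\mathcal{S}'(\Rn)$ to conclude boundedness on $h^p(\Rn)$.
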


As a consequence of \cite[Theorem 5.3 and 5.8]{DLPV} and the previous corollary, we can then state necessary and sufficient conditions for the boundedness of (strongly singular) inhomogeneous Calder\'on--Zygmund operators on $h^p(\Rn)$ for all $p \in (0,1]$.

\begin{theorem} \label{theorem:thm2}
	Let $0<p\leq 1$ and $T$ a (strongly singular) inhomogeneous Calder\'on--Zygmund operator. Then $T$ is bounded on $h^p(\R^n)$ if and only if \eqref{inhomogeneous-condition} holds for every ball $B$ in $\R^n$. 
\end{theorem}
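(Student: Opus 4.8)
The plan is to deduce Theorem~\ref{theorem:thm2} by combining the sufficiency results of \cite{DLPV} with Corollary~\ref{corollary:converse-ICZOp} (equivalently, Theorem~\ref{theorem:converse}), the only genuinely new input being a verification that the kernel bounds defining a (strongly singular) inhomogeneous Calder\'on--Zygmund operator force $T$ to send each $(p,2)$ atom to a pseudo-molecule.

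For the ``if'' direction I would assume that \eqref{inhomogeneous-condition} holds for every ball $B=B(x_0,r)\subset\R^n$ with $r<1$. An inhomogeneous Calder\'on--Zygmund operator is treated in \cite[Theorem 5.3]{DLPV} and a strongly singular one in \cite[Theorem 5.8]{DLPV}: in both cases it is shown that, once the associated local Campanato-type cancellation conditions are imposed on $T$, the operator extends to a bounded operator on $h^p(\R^n)$ for all $0<p\le 1$. So the task reduces to checking that the cancellation hypotheses used there are exactly \eqref{inhomogeneous-condition}, i.e.\ that requiring $T^*[(\cdot-x_0)^\alpha]$ to lie in the local Campanato class with modulus $\Psi_{p,\alpha}$ is the same hypothesis, term by term and including the logarithmic refinement in the borderline case $|\alpha|=\gamma_p=N_p$, and then to invoke those theorems.

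For the ``only if'' direction, suppose $T$ is bounded on $h^p(\R^n)$. By definition a (strongly singular) inhomogeneous Calder\'on--Zygmund operator is a continuous linear map $\mathcal S'(\R^n)\to\mathcal S'(\R^n)$ whose kernel satisfies size, smoothness and (in the strongly singular case) rescaled off-diagonal decay estimates. Estimating $Ta$ for $a$ a $(p,2)$ atom supported in a ball $B$ in the usual way --- using $L^2$ boundedness on a fixed dilate of $B$ and the kernel bounds together with the (approximate) moment conditions of $a$ on the complement --- shows that $Ta$ is a pseudo-molecule (indeed a pre-molecule in the non-strongly-singular case, via Lemma~\ref{example:premolecule}) centered in the same ball as $a$; this computation is carried out in \cite{DLPV} and is precisely what is needed to enter the hypothesis of Theorem~\ref{theorem:converse}. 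Applying Theorem~\ref{theorem:converse} (or, in the pre-molecule case, Corollary~\ref{corollary:converse-ICZOp}) then gives \eqref{inhomogeneous-condition} for every ball $B$ with $r<1$, which is all that is claimed.

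The step I expect to be the main obstacle is the bookkeeping of definitions across the two papers: confirming that the cancellation conditions appearing as \emph{sufficient} hypotheses in \cite[Theorems 5.3 and 5.8]{DLPV} coincide with \eqref{inhomogeneous-condition}, and --- for the strongly singular operators --- that the image of a $(p,2)$ atom genuinely lies in the pseudo-molecule class of Definition~\ref{pseudo-molecule}, so that Theorem~\ref{theorem:converse} (not merely Corollary~\ref{corollary:converse-ICZOp}) applies. Once these identifications are made the theorem follows at once.
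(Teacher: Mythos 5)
Your proposal is correct and follows essentially the same route as the paper: the kernel bounds together with the $L^2$/$L^q$ boundedness show (as in the proofs of \cite[Theorems 5.3 and 5.8]{DLPV}, with no cancellation assumption on $T$) that $Ta$ satisfies the pre-molecule size conditions (M1)--(M2), and then Corollary~\ref{corollary:converse-ICZOp} gives both directions. The only cosmetic difference is that the paper routes the sufficiency direction through Corollary~\ref{corollary:converse-ICZOp} (whose proof reuses the molecular argument of \cite[Theorem 5.3]{DLPV}) instead of quoting \cite[Theorems 5.3 and 5.8]{DLPV} directly, which sidesteps the definitional bookkeeping you flag as the main obstacle.
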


In the range $\frac{n}{n+1} <p<1$, sufficient conditions for the continuity of inhomogeneous Calder\'on--Zygmund operators in $h^p(\Rn)$ were studied in \cite[Theorem 1.1]{DingHanZhu2020}. The condition $T^{\ast}(1) \in \dot{\Lambda}_{n(1/p-1)}$ was used to show that $T$ maps atoms into molecules, followed by an application of the molecular decomposition for $h^p(\Rn)$  given by Komori in \cite{Komori2001}, valid for $\frac{n}{n+1} <p<1$. The molecules in this case resemble the homogeneous case, except that the vanishing integral condition is replaced by a uniform estimate of its size, namely $|\int M| \leq C$ (see \cite[Definition 4.4]{Komori2001}). Conversely,  in \cite{DingHanZhu2020} the authors used the bound
\begin{equation}
 \label{eq:condition-Ding-Han-Zhu}
\bigg|\int f\bigg| \lesssim \| f \|_{h^p}, \;\; \;\; f\in L^2(\Rn)\cap h^p(\Rn),
\end{equation}
to show, via duality, that if $T$ is bounded on $h^p(\Rn)$ for $\frac{n}{n+1}<p<1$, then $T^{*}(1) \in \Lambda_{n(1/p-1)}$, the inhomogeneous Lipschitz space that is the dual of $h^p(\Rn)$ in this range.

Looking at $p =1$, one sees that the Komori condition is not a sufficiently strong moment condition, and \eqref{eq:condition-Ding-Han-Zhu} follows from the embedding of $h^1(\Rn)$ in $L^1(\Rn)$.  As pointed out above, the stronger vanishing condition \eqref{eq:p=1} holds for functions with compact support.  In \cite{DLPV}, a molecular theory for $h^p(\Rn)$, for all $0<p\leq 1$, was presented, covering the case studied by Komori and requiring the logarithmic decay of the highest-order moments when $p=\frac{n}{n+k}$, for every $k\in\Z_{+}$. These conditions are shown to be necessary in Proposition~\ref{proposition:pseudo-molecule-moment-estimate}.

Very recently, molecules and cancellation conditions for the boundedness of inhomogeneous Calder\'on--Zygmund operators were also studied by Bui and Ly in \cite{Bui-Ly-preprint}.  The moments of the molecules in \cite[Definition 2.2]{Bui-Ly-preprint} are required to decay like a power of the radius of the associated ball, a stronger condition previously introduced in  \cite[Appendix B]{GaliaThesis}. Furthermore, the sufficient conditions for the boundedness in \cite[Theorem 1.2]{Bui-Ly-preprint} differ significantly from the necessary conditions (again based on a duality argument), and do not apply in the cases {$p=\frac{n}{n+s}$, $s$ an integer,} which are precisely the cases covered by our result.

The organization of the paper is as follows. In Section \ref{section:preliminaries}, we provide basic definitions and facts about Hardy 
spaces that will be used in this work. Section \ref{section:proof-theorem} is devoted to showing the necessity of cancellation conditions.  It contains the definitions of pseudo-molecules, pre-molecules and inhomogeneous Calder\'on--Zygmund operators, and culminates in the proofs of Theorems~\ref{theorem:converse} and \ref{theorem:thm2}.

\section{Preliminaries} \label{section:preliminaries}

Throughout this paper, we denote by $\Rn$ the $n$-dimensional Euclidean space and $B(x_0,r)$ a ball in $\Rn$ centered at $x_0 \in \Rn$ with radius $r>0$. When omitting the center and the radius, denoting the ball only by $B$, we mean a generic ball in $\Rn$. By $\N$ and $\Z_{+}$ we denote the positive and  nonnegative integers, respectively. Given a locally integrable function $f$, we write
$$
f_B := \fint_{B} f(x)dx := \frac{1}{|B|} \int_{B} f(x)dx 
$$
for its mean over the ball $B$, in which $|B|$ denotes the Lebesgue measure of $B\subset \Rn$. The notation $f \lesssim g$ means that there exists a geometric constant $C>0$ such that $f \leq C \, g$. For $0<p\leq 1$, write $\gamma_p := n(\frac{1}{p} - 1)$ and $N_p := \lfloor \gamma_p \rfloor$, where $\lfloor \, \cdot \, \rfloor$ is the floor function.

The spaces $h^p(\Rn)$ for $p>0$ were introduced by Goldberg \cite{Goldberg1979}. For a given $\varphi \in \mathcal{S}(\Rn)$ such that $\int{\varphi(x)dx} \neq 0$ and $t>0$, let $\varphi_t(x) = t^{-n}\varphi(t^{-1}x)$. We say that $f \in \mathcal{S}'(\Rn)$ lies in $h^p(\Rn)$ if
$$
\|f\|_{h^p} := \|m_{\varphi}\|_{L^p} < \infty, \;\; \text{where} \;\;	m_{\varphi}f(x) := \sup_{0<t<1} \left| \langle f,\, \varphi_t(x-\cdot)  \rangle \right|.
$$
The functional $\| \cdot \|_{h^p}$ defines a norm for $p\geq 1$ and a quasi-norm otherwise. We refer to it always as a norm for simplicity. Even though we start with a fixed $\varphi$, the local Hardy spaces remains the same no matter which $\varphi$ we choose. It is well known that $h^p(\R^n) = L^p(\Rn)$ with equivalent norms if $p > 1$, and $\mathcal{S}(\Rn) \subset h^1(\Rn) \subsetneq L^1(\Rn)$ continuously. 

As a consequence of the relationship between $h^p(\Rn)$ and $H^p(\Rn)$, see \cite[Lemma 4]{Goldberg1979}, Goldberg showed that elements of local Hardy spaces can be decomposed into atoms in which no moment condition is required when the atom is supported in large balls. 

\begin{definition} 
\label{goldberg-atom}
	Let $0<p\le 1 \le s \le \infty$ with $p\neq s$. A measurable function $a$ is called a $(p,s)$ atom (for $h^p(\Rn)$) if there exists a ball $B=B(x_0,r)\subset \Rn$ such that
	\begin{equation*}
		\textnormal{(i)} \;\; \supp(a) \subset B; \quad \textnormal{(ii)} \;\; \| a \|_{L^s} \le r^{n\left(\frac{1}{s}-\frac{1}{p}\right)}; \quad \textnormal{(iii)} \;\; \textnormal{If} \ r<1, \int{a(x)x^{\alpha}}dx=0 \ \textnormal{for all} \ |\alpha|\le N_p \ . 
	\end{equation*}
\end{definition}

A function $a$ satisfying the vanishing moment condition (iii) regardless of the size of its support, in addition to  (i) and (ii), will be called $(p,s)$ atom for $H^p(\Rn)$.

In \cite[Lemma 5]{Goldberg1979}, it was shown for $s = \infty$ that if $f \in h^p(\Rn)$, then there exists a sequence $\{a_j\}_{j \in \N}$ of $(p,s)$ atoms in $h^p(\Rn)$ and a sequence $\{\lambda_j\}_{j \in \N}$ of complex scalars in $\ell^p(\C)$ such that 	
\begin{equation} \label{goldberg-decomp}
	f = \sum_{j \in \N} \lambda_j \, a_j \ \ \mbox{in $h^p$, and} \ \ \| f \|_{h^p} \approx \inf  \bigg( \sum_{j \in \N} |\lambda_j|^p \bigg)^{{1}/{p}},  
\end{equation}
where the infimum is taken over all such representations.

We now give the characterization of $h^p(\Rn)$ by the grand maximal function, and also replace the restriction $0 < t < 1$ in the definition of the maximal function by $0 < t < T$ for some $T < \infty$, which results in equivalent norms. Given $0<T<\infty$ and $x\in \Rn$, consider the family
$$
\mathcal{F}^{T,\,x}_{k} = \left\{ \phi \in \, C^\infty(\Rn): \ \supp(\phi) \subset B(x,t), \ 0<t<T \ \text{and} \ \|\partial^{\alpha}\phi\|_{L^{\infty}} \leq t^{-n-|\alpha|} \;\; \text{for all} \;\; |\alpha|\leq k \right\}.
$$
We define the \textit{local grand maximal function} associated to the family $\mathcal{F}^{T,\,x}_{k}$ by
$$
m_{\mathcal{F}_{k}}(f)(x)=\sup_{\phi \, \in \, \mathcal{F}^{T,x}_{k}} \left| \langle f,\,\phi\rangle \right|.
$$ 
\begin{lemma} \label{lemma:local-grand-maximal}
	Let $f\in L^{1}_{loc}(\Rn)$. If $k \in \N$ is such that $\frac{n}{n+k}<p\leq \frac{n}{n+k-1}$ (i.e.\ $k = N_p + 1$) then
	\begin{equation} \label{eq:bound-norm-hp}
	\|m_{\mathcal{F}_{k}}(f) \|_{L^p} \le C_{n,p,T} \, \| f \|_{h^p},	
	\end{equation}
	where $C_{n,1,T} \lesssim 1 + \log_+T$ and $C_{n,p,T} \lesssim \max \{1,\, T^{n(1/p - 1)}\}$ for $p < 1$.
\end{lemma}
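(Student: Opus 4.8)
The plan is to bound $m_{\mathcal{F}_k}(a)$ uniformly in $L^p$ for a single $(p,\infty)$ atom $a$ and then pass to a general $f$ through Goldberg's atomic decomposition \eqref{goldberg-decomp}. One may assume $\|f\|_{h^p}<\infty$, since otherwise there is nothing to prove; then $f=\sum_j\lambda_j a_j$ with $(p,\infty)$ atoms $a_j$ and $\sum_j|\lambda_j|^p\lesssim\|f\|_{h^p}^p$. As convergence in $h^p$ implies convergence in $\mathcal{S}'(\Rn)$ and every $\phi\in\mathcal{F}_k^{T,x}$ belongs to $C_c^\infty(\Rn)\subset\mathcal{S}(\Rn)$, we get $\langle f,\phi\rangle=\sum_j\lambda_j\langle a_j,\phi\rangle$; the $p$-subadditivity of $t\mapsto t^p$ and Tonelli then give $\|m_{\mathcal{F}_k}(f)\|_{L^p}^p\le\sum_j|\lambda_j|^p\|m_{\mathcal{F}_k}(a_j)\|_{L^p}^p$. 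Hence it suffices to prove $\|m_{\mathcal{F}_k}(a)\|_{L^p}\le C_{n,p,T}$ for every $(p,\infty)$ atom $a$, with $C_{n,p,T}$ of the stated size.

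Fix such an $a$, supported in $B_0=B(x_0,r_0)$ with $\|a\|_{L^\infty}\le r_0^{-n/p}$, and recall that $k=N_p+1$. I would split $\Rn=2B_0\cup(2B_0)^c$. On $2B_0$, for any $\phi\in\mathcal{F}_k^{T,x}$ the crude bound $|\langle a,\phi\rangle|\le\|a\|_{L^\infty}\|\phi\|_{L^1}\le\|a\|_{L^\infty}|B(x,t)|\,\|\phi\|_{L^\infty}\lesssim r_0^{-n/p}$ yields $\int_{2B_0}m_{\mathcal{F}_k}(a)^p\lesssim 1$. For the far region the key remark is that if $x\notin 2B_0$ and $\langle a,\phi\rangle\neq 0$ for some $\phi\in\mathcal{F}_k^{T,x}$ supported in $B(x,t)$, then $B(x,t)\cap B_0\neq\emptyset$ forces $t>|x-x_0|-r_0$, and since $r_0\le|x-x_0|/2$ we get $t>|x-x_0|/2$; together with $t<T$ this gives $|x-x_0|<2T$. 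Thus $m_{\mathcal{F}_k}(a)$ vanishes on $(2B_0)^c\setminus B(x_0,2T)$, and on the annulus $2r_0\le|x-x_0|<2T$ the relevant scales always satisfy $t\gtrsim|x-x_0|$.

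Now split according to the size of the atom. If $r_0<1$, the moment conditions up to order $N_p$ let us replace $\phi$ by $\phi$ minus its degree-$N_p$ Taylor polynomial at $x_0$; Taylor's remainder together with the bound $\|\partial^\beta\phi\|_{L^\infty}\le t^{-n-|\beta|}$, available for $|\beta|=N_p+1=k$, gives $|\langle a,\phi\rangle|\lesssim\|a\|_{L^1}\,t^{-n-N_p-1}r_0^{N_p+1}\lesssim r_0^{\,n-n/p+N_p+1}|x-x_0|^{-n-N_p-1}$. Raising to the power $p$ and integrating over $|x-x_0|\ge 2r_0$ converges at infinity exactly because $(n+N_p+1)p>n$ (i.e.\ $N_p+1>\gamma_p$), and the powers of $r_0$ cancel, leaving a bound $\lesssim 1$ with no dependence on $T$. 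If $r_0\ge 1$, there is no moment condition and the best available estimate is $|\langle a,\phi\rangle|\le\|a\|_{L^\infty}|B_0|\,t^{-n}\lesssim r_0^{\,n-n/p}|x-x_0|^{-n}$; integrating the $p$-th power over the annulus $2r_0\le|x-x_0|<2T$ and using $r_0\ge 1$ gives $\lesssim T^{\,n-np}$ when $p<1$ and $\lesssim\log(T/r_0)\le 1+\log_+T$ when $p=1$. Collecting these with the near estimate and taking $p$-th roots yields $\|m_{\mathcal{F}_k}(a)\|_{L^p}\lesssim\max\{1,T^{n(1/p-1)}\}$ for $p<1$ and $\|m_{\mathcal{F}_k}(a)\|_{L^p}\lesssim 1+\log_+T$ for $p=1$, which is what is needed.

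The only genuinely delicate point is the arithmetic in the far-region integrals: one must keep careful track of the powers of $r_0$ and verify that they cancel — so that large atoms ($r_0\ge 1$) contribute uniformly up to the $T$-factor and small atoms ($r_0<1$) contribute uniformly with no $T$-factor — and one must check that it is precisely the absence of cancellation for large atoms, integrated over a ball of radius $\sim T$, that produces the logarithmic growth when $p=1$ and the power growth $T^{n(1/p-1)}$ when $p<1$. The reduction to atoms and the support/scale observations are routine Hardy-space manipulations, so I expect no conceptual obstacle beyond this bookkeeping.
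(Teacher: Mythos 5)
Your proposal is correct and follows essentially the same route as the paper's proof: reduction to a single $(p,\infty)$ atom via Goldberg's decomposition and $p$-subadditivity, the split into $B(x_0,2r_0)$ and its complement, the observation that nonvanishing of $\langle a,\phi\rangle$ in the far region forces $|x-x_0|/2 < t < T$, the Taylor-expansion argument with moments up to order $N_p=k-1$ for small atoms (with the $r_0$-powers cancelling, so no $T$-dependence), and the crude $L^1$--$L^\infty$ bound for large atoms producing exactly the $\log T$ (for $p=1$) and $T^{n(1/p-1)}$ (for $p<1$) growth of the constant. No gaps; the bookkeeping you flag as the delicate point is precisely what the paper's displayed estimates carry out.
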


\begin{proof}
Since the atomic decomposition \eqref{goldberg-decomp} converges in the sense of distributions, and $m_{\mathcal{F}_{k}}$ is sub-linear, it suffices to prove that
$$
	\| m_{\mathcal{F}_{k}} (a) \|_{L^p} \leq C
	$$
for a $(p, \infty)$ atom $a$ supported on some ball $B=B(x_0,r) \subset \Rn$. 	Indeed, writing $f=\sum_{j \in \N} \lambda_j \, a_j$, this will give
	$$
	\| m_{\mathcal{F}_{k}}(f) \|_{L^p} \leq \bigg( \sum_{j\in\N} |\lambda|^p \, \| m_{\mathcal{F}_{k}}(a) \|_{L^p}^{p}  \bigg)^{1/p} \leq C \bigg( \sum_{j\in\N} |\lambda|^p \bigg)^{1/p}$$
	and we can take the decomposition so that the right-hand-side is bounded by a constant multiple of $\| f \|_{h^p}$.

	So fix $a$ and split
	$$
	\| m_{\mathcal{F}_{k}} (a) \|_{L^p}^{p} = \int_{B(x_0,2r)} [m_{\mathcal{F}_{k}} (a)(x)]^pdx + \int_{\Rn \setminus B(x_0,2r)} [m_{\mathcal{F}_{k}} (a)(x)]^pdx.
	$$
	To deal with the first integral, note that for any $\phi \in \mathcal{F}_{k}^{T,\,x}$ one has
	\begin{align*}
		\left|\int a\phi \right| \leq \| a\|_{L^{\infty}} \, \| \phi \|_{L^{\infty}} \, |B(x_0,r)\cap B(x,t)| \leq C_{n} \, r^{-\frac np}.
	\end{align*}
	Then
	\begin{align*}
		\int_{B(x_0,2r)} [m_{\mathcal{F}_{k}} (a)(x)]^pdx \leq C_{n,p} \, r^{-n} \, |B(x_0,2r)| \simeq C_{n,p}.
	\end{align*}

	For the non-local case, when $x \not\in B(x_0,2r)$, note that $\int a\phi$ vanishes unless  $B(x,t) \cap B(x_0,r)\neq \emptyset$, hence $r \leq \frac{|x-x_0|}{2} < t < T$. Thus, if $r \geq 1$ we have
	$$\left|\int a\phi \right| \leq   \| a \|_{L^1}  \| \phi \|_{L^{\infty}} \leq C_{n} \,  r^{n\left(1-\frac{1}{p}\right)} t^{-n} \leq C_{n} |x-x_0|^{-n},$$
	and therefore 
	$$\int_{\Rn \setminus B(x_0,2r)} [m_{\mathcal{F}_{k}} (a)(x)]^pdx \lesssim \int_{2r<|x-x_0|<2T} |x-x_0|^{-np}dx \\
		\lesssim \int_{2<|x-x_0|<2T} |x-x_0|^{-np}dx < \infty.$$
		Note that the integral on the right is of the order of $\log T$ when $p = 1$ and $T^{n(1 - p)}$ when $p < 1$.
	
	For $0<r<1$, we have the standard $H^p(\Rn)$ argument, using the moment conditions of $a$ up to the order $N_p = k-1$ and the Taylor expansion of $\phi \in \mathcal{F}^{T,\,x}_{k}$ to write
	\begin{align*}
		\left|\int a(y) \phi(y)dy \right| &= \bigg| \int \bigg[ \phi(y)-\sum_{|\alpha| \leq k-1} C_{\alpha} \, \partial^{\alpha} \phi(x-x_0)(y-x_0)^{\alpha}  \bigg] a(y) dy \bigg| \\
		& \leq \sum_{|\alpha| = k} C_{\alpha} \, \| \partial^{\alpha} \phi \|_{L^{\infty}}  \, r^{|\alpha|+n} \, \| a \|_{L^{\infty}} \\
		&\leq C_{n} \, t^{-n-k} \, r^{k+n\left(1-\frac{1}{p}\right)}.
	\end{align*}
	Then
	\begin{align*}
		\int_{\Rn \setminus B(x_0,2r)} [m_{\mathcal{F}_{k}} (a)(x)]^pdx \leq C_{n,p} \, r^{kp+np-n} \int_{|x-x_0|>2r} |x-x_0|^{p(-k-n)}dx < \infty,
	\end{align*}
	since $p>n/(n+k)$.
\end{proof}

\begin{remark}
	\textnormal{It is also possible to show the other direction of \eqref{eq:bound-norm-hp}, since $m_{\varphi}f \leq C \, m_{\mathcal{F}_{k}}f$.}
\end{remark}

\section{The necessity of the cancellation conditions} 
\label{section:proof-theorem}
	
Our first result is a strengthening  of \eqref{eq:condition-Ding-Han-Zhu} for $f\in h^p(\Rn)$ supported in small balls,
where more moments are considered. In particular, a more appropriate logarithmic bound, depending on the ball $B$, is provided when $p=\frac{n}{n+k}$ for some $k \in \Z_{+}$, that is $\gamma_p \in \Z_{+}$.  
	
\begin{proposition} 
\label{lemma:moment-log-control}
	Let $g \in h^p(\R^n)$ be supported in $B(x_0,r)$ for some $x_0 \in \Rn$ and $0<r<1$.  Then for $\alpha \in \Z_+^n$, the moments $\langle g, \, (\cdot-x_0)^{\alpha} \rangle$ are well-defined and satisfy
	\begin{equation} \label{eq:molecule-ball}
		\left| \langle g, \, (\cdot-x_0)^{\alpha} \rangle \right| \leq \left\{ \begin{array}{ll}
			C_{\alpha,p} \, \| g \|_{h^p} &\text{if} \;\; |\alpha|<\gamma_p; \\
			C_{\alpha,p} \, \| g \|_{h^p} \, \displaystyle \bigg[\log \left(1+\frac{1}{r}\right)\bigg]^{-1/p} &\text{if} \;\; |\alpha|=\gamma_p=N_p.
		\end{array} \right.
	\end{equation}
\end{proposition}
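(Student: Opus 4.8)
The plan is to test the distribution $g$ against smooth functions that coincide with the monomial $(\cdot-x_0)^\alpha$ on a neighbourhood of $B(x_0,r)$, and to use the grand maximal function characterization from Lemma~\ref{lemma:local-grand-maximal} to turn pairings $\langle g,\phi\rangle$ into $\|g\|_{h^p}$. First I would fix a cutoff $\eta\in C_c^\infty(\Rn)$ with $\eta\equiv 1$ on $B(0,1)$ and $\supp\eta\subset B(0,2)$, and for a parameter $R\in(r,1)$ to be chosen set $\psi_R(y):=\eta((y-x_0)/R)\,(y-x_0)^\alpha$. Since $g$ is supported in $B(x_0,r)$ and $\psi_R=(\cdot-x_0)^\alpha$ there, $\langle g,(\cdot-x_0)^\alpha\rangle=\langle g,\psi_R\rangle$; in particular the left-hand side is well-defined (this also uses that $g$, being in $h^p$ with compact support, extends to act on $C^\infty$ functions, which follows from the atomic decomposition). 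Now $\psi_R$ is supported in $B(x_0,2R)$ with $2R<2$, and a direct computation gives $\|\partial^\beta\psi_R\|_{L^\infty}\lesssim R^{|\alpha|-|\beta|}$ for $|\beta|\le k:=N_p+1$. Hence $c\,R^{-n-|\alpha|}\,\psi_R$, for a suitable dimensional constant $c$, belongs to the family $\mathcal{F}^{2,x_0}_k$ (with $t=2R$), so by Lemma~\ref{lemma:local-grand-maximal},
\begin{equation*}
\left|\langle g,(\cdot-x_0)^\alpha\rangle\right| = \left|\langle g,\psi_R\rangle\right| \lesssim R^{n+|\alpha|}\, m_{\mathcal{F}_k}(g)(x_0) \le R^{n+|\alpha|}\,\inf_{x\in B(x_0,2R)} m_{\mathcal{F}_k}(g)(x)\,.
\end{equation*}
Wait — more carefully, $m_{\mathcal F_k}(g)(x)$ for any $x$ with $|x-x_0|<2R$ dominates $|\langle g,\phi\rangle|$ for $\phi$ supported in $B(x,t)$ with $t<2$; since $\psi_R$ is supported in $B(x_0,2R)\subset B(x,4R)$, we get the same bound with the ball $B(x_0,4R)$. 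So averaging in $L^p$,
\begin{equation*}
\left|\langle g,(\cdot-x_0)^\alpha\rangle\right|^p \lesssim R^{p(n+|\alpha|)}\,\frac{1}{|B(x_0,4R)|}\int_{B(x_0,4R)} m_{\mathcal{F}_k}(g)(x)^p\,dx \lesssim R^{p(n+|\alpha|)-n}\,\|g\|_{h^p}^p\,.
\end{equation*}

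Now optimize over $R\in(r,1)$. The exponent of $R$ is $p(n+|\alpha|)-n = p|\alpha|-n(1/p-1)p = p(|\alpha|-\gamma_p)$. When $|\alpha|<\gamma_p$ this exponent is negative, so the best (smallest) bound is obtained by taking $R$ as large as allowed, $R\sim 1$, giving $|\langle g,(\cdot-x_0)^\alpha\rangle|\lesssim \|g\|_{h^p}$, the first case. The delicate case is $|\alpha|=\gamma_p=N_p$, where the exponent $p(|\alpha|-\gamma_p)=0$ and the naive estimate only yields $|\langle g,(\cdot-x_0)^\alpha\rangle|\lesssim\|g\|_{h^p}$ with no decay — the logarithmic gain is lost. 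Here I would instead exploit that the grand maximal function gives a bound uniform in the choice of test scale, and subtract off lower-order contributions to capture cancellation across a range of dyadic scales between $r$ and $1$. Concretely, for each dyadic $R_j=2^{-j}$ with $r<R_j<1$ (there are $\sim\log(1+1/r)$ of them), the estimate above reads $|\langle g,(\cdot-x_0)^\alpha\rangle|^p\lesssim R_j^{0}\cdot|B(x_0,4R_j)|^{-1}\int_{B(x_0,4R_j)}m_{\mathcal F_k}(g)^p$; summing over $j$ and using that the balls $B(x_0,4R_j)$ have bounded overlap (each point lies in $O(1)$ of them for each dyadic generation, and the generations telescope) we bound $\log(1+1/r)\cdot|\langle g,(\cdot-x_0)^\alpha\rangle|^p \lesssim \sum_j |B(x_0,4R_j)|^{-1}\int_{B(x_0,4R_j)}m_{\mathcal F_k}(g)^p \lesssim \|m_{\mathcal F_k}(g)\|_{L^p}^p\lesssim\|g\|_{h^p}^p$, which rearranges to the claimed bound $|\langle g,(\cdot-x_0)^\alpha\rangle|\lesssim[\log(1+1/r)]^{-1/p}\|g\|_{h^p}$.

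The main obstacle is precisely this last step: the straightforward single-scale test-function argument is scale-invariant in the borderline case and produces no logarithmic decay, so one genuinely needs to combine the estimates at the $\sim\log(1+1/r)$ distinct scales between $r$ and $1$ and argue that the maximal-function mass cannot be large at all of them simultaneously. One has to be careful that the choice of test function at scale $R_j$ can still reproduce the \emph{same} moment $\langle g,(\cdot-x_0)^\alpha\rangle$ exactly (this works because $g$ is supported in $B(x_0,r)\subset B(x_0,R_j)$ for every such $j$, so the cutoff at scale $R_j$ is identically $1$ on $\supp g$), and that the bounded-overlap / telescoping bookkeeping in the $L^p$ sum is done with $p\le1$ (so the $\ell^p\hookrightarrow\ell^1$ type inequality goes the favorable direction). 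Everything else — verifying $\psi_R$ fits the grand maximal family up to normalization, the derivative bounds on $\psi_R$, the well-definedness of the moments via the atomic decomposition — is routine.
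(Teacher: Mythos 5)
Your construction of the test functions and the subcritical case $|\alpha|<\gamma_p$ are fine, and the overall mechanism (pair $g$ with a cutoff monomial, recognize its normalization as an element of $\mathcal{F}^{T,x}_k$, and invoke Lemma~\ref{lemma:local-grand-maximal}) is exactly the paper's. The gap is in the borderline case, at the step where the logarithm is supposed to appear. The balls $B(x_0,4R_j)$, $R_j=2^{-j}$, are concentric and nested, so they do \emph{not} have bounded overlap, and the inequality $\sum_j \fint_{B(x_0,4R_j)} [m_{\mathcal{F}_k}(g)]^p \lesssim \|m_{\mathcal{F}_k}(g)\|_{L^p}^p$ is false: take $g$ a single $(p,\infty)$ atom on $B(x_0,r)$, so that $\int_{B(x_0,2r)}[m_{\mathcal{F}_k}(g)]^p\gtrsim 1$; then $\fint_{B(x_0,4R_j)}[m_{\mathcal{F}_k}(g)]^p\gtrsim R_j^{-n}$ and the sum over the dyadic scales between $r$ and $1$ is of order $r^{-n}$, not $O(\|g\|_{h^p}^p)$. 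With that middle quantity allowed to be as large as $r^{-n}\|g\|_{h^p}^p$, your chain only yields $|\langle g,(\cdot-x_0)^\alpha\rangle|^p\lesssim r^{-n}[\log(1+1/r)]^{-1}\|g\|_{h^p}^p$, which is useless; so as written the critical case is not proved.

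The repair is to localize each scale's lower bound to \emph{disjoint} regions rather than averaging over the full nested balls, and this is what the paper does. For $x$ with $|x-x_0|\sim R$ (in fact for every $x$ with $|x-x_0|>r/2$), one can choose the admissible test function at scale $t\sim|x-x_0|$, centered at $x$ but equal to a multiple of $(y-x_0)^\alpha$ on $B(x_0,r)$, to get the pointwise bound $m_{\mathcal{F}_k}(g)(x)\gtrsim |x-x_0|^{-n-|\alpha|}\,\big|\langle g,(\cdot-x_0)^\alpha\rangle\big|$. In the critical case $p(n+|\alpha|)=n$, integrating this over the single annulus $\{r/2<|x-x_0|<(r+1)/2\}$ (equivalently, summing over the disjoint dyadic annuli $\{|x-x_0|\sim R_j\}$, each of which contributes $\gtrsim |\langle g,(\cdot-x_0)^\alpha\rangle|^p$) gives
\begin{equation*}
\|g\|_{h^p}^p \gtrsim \int_{r/2<|x-x_0|<\frac{r+1}{2}} [m_{\mathcal{F}_k}(g)(x)]^p\,dx \gtrsim \big|\langle g,(\cdot-x_0)^\alpha\rangle\big|^p \log\Big(1+\frac1r\Big),
\end{equation*}
which is the desired estimate. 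So the missing idea is precisely that the test function must be recentered at $x$ with scale tied to $|x-x_0|$, so that the lower bound for the grand maximal function lives on disjoint annuli and the $\log(1+1/r)$ comes from integrating $|x-x_0|^{-n}$, not from counting scales against a bounded-overlap claim that fails for nested balls.
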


	Note that condition \eqref{eq:molecule-ball} for $|\alpha|=\gamma_p=N_p$ gets stronger as $r\rightarrow 0$.
	
\begin{proof}
Since $g$ is a distribution of compact support and hence acts on $C^\infty(\Rn)$, we can define
$\langle g, \, (\cdot-x_0)^{\alpha} \rangle$ unambiguously for any multi-index $\alpha \in \Z_{+}^{n}$, and $\langle g, \, (\cdot-x_0)^{\alpha} \rangle = \langle g, \, \phi \rangle$
for all $\phi \in C^{\infty}(\Rn)$ such that $\phi(y) = (y-x_0)^{\alpha}$ on the support of $g$.

By a translation argument we may assume that $x_0=0$. For each unit vector on $v\in \mathbb{S}^{n-1}$ and $\alpha \in \Z_{+}^{n}$ such that $|\Ga|\leq N_p$, we choose $\phi_0^{v,\Ga}$ satisfying the following conditions:
\begin{itemize}
	\item[\textnormal{(i)}] $\phi_0^{v,\Ga} \in C^{\infty}_{c}(\Rn)$ with support in $B\left(\frac{v}{2}, 2\right)$ and $\|\partial^{\beta} \, \phi_0^{v,\Ga}\|_{L^{\infty}}\leq 
	2^{|\beta|-2n}$ for all $|\beta|\leq N_p+1$;
	\item[\textnormal{(ii)}] $\phi_0^{v,\Ga}(y)= C_{\alpha} \, y^{\Ga}$ for all $|y|<1$ for some constant $C_\alpha$ depending only on $n$ and $\alpha$;
	\item[\textnormal{(iii)}] $\int \phi_0^{v,\Ga}(y)dy\neq 0$.
\end{itemize}
For each $x \in \Rn$ with $|x| > \frac{r}{2}$ we define
$$
\phi^{x,\Ga}(y)=\frac{1}{|x|^{n}} \,\, \phi^{\frac{x}{|x|},\Ga}_0\bigg(\frac{y}{2|x|}\bigg).
$$
We claim  $\phi^{x,\Ga} \in \mathcal{F}_{k}^{T,\,x}$ for $T=2$ and $k\leq N_{p}+1$. Indeed, note first that $\supp(\phi^{x,\Ga}) \subset B(x,t)$ for $t=4|x|$  since if $|y-x|>t$ we have
$$
\left| \frac{y}{2|x|} - \frac{x}{2|x|} \right| = \frac{|y-x|}{2|x|} \geq \frac{t}{2|x|} = 2
$$ 
and then $\phi^{\frac{x}{|x|},\Ga}_0({y}/{2|x|})=0$. Moreover, for $|\beta| \leq N_p+1$, by assumption (i),
$$
\left\| \partial^{\beta} \, \phi^{x,\Ga} \right\|_{L^{\infty}} = 2^{-|\beta|}|x|^{-n-|\beta|} \left\| \partial^{\beta} \, \phi_{0}^{\frac{x}{|x|},\Ga} \right\|_{L^{\infty}} \leq t^{-n-|\beta|}.
$$

On the support of $g$, $|y|<r$ and $|x|>\frac{r}{2}$ so $\frac{|y|}{2|x|} < 1$ and  by assumption (ii), $\phi^{x,\Ga}(y)=\frac{C_{\alpha} \, y^{\Ga}}{|x|^{n+|\alpha|}}$. Hence 
\begin{align*}
	m_{\mathcal{F}_k}(g)(x)=\sup_{\phi \, \in \, \mathcal{F}^{T,x}_{k}} \left| \langle g,\,\phi \rangle \right| &\geq \left| \langle g,\,\phi^{x,\Ga} \rangle \right| = C_\alpha|x|^{-n-|\alpha|} \left| \langle g, \, (\cdot - x_0)^\alpha \rangle \right|.
\end{align*}

When $|\alpha|=\gamma_p=N_p$, this gives
\begin{align*}
	\| g \|_{h^p}^{p} &\geq \int_{\frac{r}{2}<|x|<\frac{r+1}{2}} [m_{\mathcal{F}_{k}}(f)(x)]^pdx \\
& \geq  C_\alpha \left| \langle g, \, (\cdot - x_0)^\alpha \rangle \right|^{p} \, \int_{\frac{r}{2}<|x|\leq \frac{r+1}{2}} |x|^{-p(n+|\alpha|)}dx\\
& \geq C_\alpha \left| \langle g, \, (\cdot - x_0)^\alpha \rangle \right|^{p} \, \log \left(1+ \frac{1}{r} \right).
\end{align*}

When $|\alpha|<\gamma_p$, 
we consider $1<|x|<\frac{3}{2}$. Since in particular $|x|>\frac{r}{2}$, the same calculations as above give 
\begin{align*}
	\| g \|_{h^p}^{p} \geq \int_{1<|x|<\frac{3}{2}} [m_{\mathcal{F}_{k}}(g)(x)]^pdx \geq   C_\alpha \left| \langle g, \, (\cdot - x_0)^\alpha \rangle \right|^{p} \, \int_{1<|x|\leq \frac{3}{2}} |x|^{-p(n+|\alpha|)}dx =  C_{n,\alpha,p} \left| \langle g, \, (\cdot - x_0)^\alpha \rangle \right|^{p}.
\end{align*}
\end{proof}

We will now show that the result above can be extended to a class of $h^p$ distributions which we will call {\em pseudo-molecules} and which do not necessarily have compact support.

\begin{definition} 
\label{pseudo-molecule}
	Fix some constant $\fC > 0$. We say that $\fM \in \mathcal{S}'(\Rn)$ is a pseudo-molecule in $h^p(\Rn)$ associated to the ball $B \subset \Rn$ if $ \,\fM=g + h$ in $\mathcal{S}'(\Rn)$, where $g \in h^p(\Rn)$ has support in $B$, $h\in H^p(\Rn)$, and 	
	$$
	\|g\|_{h^p} + \|h\|_{H^p} \leq \fC.
	$$
\end{definition}

\begin{proposition} 
	\label{proposition:pseudo-molecule-moment-estimate}
	Let $0<p\leq 1$ and $\fM$ a pseudo-molecule in $h^p(\Rn)$ associated to the ball $B=B(x_0,r)$ with $0<r<1$. Then for $\alpha \in \Z_+^n$, $|\alpha| \leq N_p$, the moments $\langle \fM, \, (\cdot-x_0)^{\alpha} \rangle$ are well-defined and satisfy
	\begin{align} 
	\label{estimate-molecule}
			\left|\left\langle \fM, \, (\cdot-x_0)^{\alpha} \right\rangle \right| \lesssim \left\{ \begin{array}{ll}
				C_{\alpha,p} \, \fC &\text{if} \;\; |\alpha|<\gamma_p; \\
				\textcolor{white}{.} & \textcolor{white}{.} \\
				\displaystyle C_{\alpha,p} \, \fC \left[\log \left(1+\frac{1}{r}\right)\right]^{-1/p} &\text{if} \;\; |\alpha|=\gamma_p=N_p.
			\end{array} \right.
	\end{align}
\end{proposition}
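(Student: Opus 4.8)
The plan is to reduce everything to Proposition~\ref{lemma:moment-log-control} by exploiting the decomposition $\fM = g + h$ provided by Definition~\ref{pseudo-molecule}. For $|\alpha|\le N_p$ one sets $\langle \fM, (\cdot-x_0)^\alpha\rangle := \langle g, (\cdot-x_0)^\alpha\rangle$, the right-hand side being unambiguous because $g$ is a distribution of compact support, exactly as at the start of the proof of Proposition~\ref{lemma:moment-log-control}; this is the natural choice precisely because the $H^p$ component $h$ "carries no moments" up to order $N_p$ (see below). Once this pairing is shown to be independent of the chosen decomposition, the estimate \eqref{estimate-molecule} is immediate: $g$ is supported in $B=B(x_0,r)$ with $0<r<1$ and $\|g\|_{h^p}\le\fC$, so Proposition~\ref{lemma:moment-log-control} applied to $g$ yields \eqref{eq:molecule-ball} with $\|g\|_{h^p}$ in place of $\fC$, which we then dominate by $\fC$.

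The only input beyond Proposition~\ref{lemma:moment-log-control} is the classical fact that a compactly supported distribution $w\in H^p(\Rn)$ satisfies $\langle w, (\cdot-c)^\alpha\rangle = 0$ for every $c\in\Rn$ and every $|\alpha|\le N_p$ (the pairing being meaningful since $w\in\mathcal{E}'(\Rn)$). I would either cite this or, to stay self-contained, reprove it by the method used for Proposition~\ref{lemma:moment-log-control}: after translating we may take $c=0$; since $w\in H^p(\Rn)$, the grand maximal function of $w$ formed over all scales $t>0$ lies in $L^p(\Rn)$, and the test functions $\phi^{x,\alpha}$ constructed there, now taken at the unrestricted scale $t=4|x|$, give, for all $|x|$ beyond some $R_0$ depending on $\supp w$,
\[
m_{\mathcal{F}_k}(w)(x)\;\geq\;|\langle w, \phi^{x,\alpha}\rangle|\;=\;C_\alpha\,|x|^{-n-|\alpha|}\,|\langle w, (\cdot-x_0)^\alpha\rangle|.
\]
Taking $p$-th powers and integrating over $\{|x|>R\}$ keeps the left side finite, whereas the right side equals a constant times $|\langle w, (\cdot-x_0)^\alpha\rangle|^p\int_{|x|>R}|x|^{-p(n+|\alpha|)}dx$; this integral diverges because $|\alpha|\le N_p$ forces $p(n+|\alpha|)\le p(n+\gamma_p)=n$, whence $\langle w, (\cdot-x_0)^\alpha\rangle=0$. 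Well-definedness of the moment of $\fM$ now follows: if $\fM = g_1+h_1 = g_2+h_2$ are two decompositions witnessing that $\fM$ is a pseudo-molecule associated to $B$, then $w:=g_1-g_2=h_2-h_1$ is supported in $B$ and belongs to $H^p(\Rn)$, so $\langle g_1, (\cdot-x_0)^\alpha\rangle=\langle g_2, (\cdot-x_0)^\alpha\rangle$ for $|\alpha|\le N_p$, and the reduction above completes the proof.

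I expect the only genuine difficulty to be conceptual rather than computational: $\fM$ need not have compact support, and for $p<1$ need not be a function, so pairing it with polynomials forces one to pass through the decomposition $\fM=g+h$ and to use the vanishing of the moments of its $H^p$ part — that is exactly where the hypothesis $h\in H^p(\Rn)$ enters, and, apart from that classical fact, the argument adds nothing new to Proposition~\ref{lemma:moment-log-control}. A minor point to handle with care is that the maximal-function equivalence must be invoked in its global ($H^p$) form when estimating $w$, rather than in the local form of Lemma~\ref{lemma:local-grand-maximal}, which is why the relevant integral is over the full exterior region and hence divergent.
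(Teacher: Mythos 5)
Your proposal is correct and follows essentially the same route as the paper: decompose $\fM = g+h$ as in Definition~\ref{pseudo-molecule}, define $\langle \fM, (\cdot-x_0)^{\alpha}\rangle$ through the compactly supported piece $g$, check independence of the decomposition via the fact that $g-g'$ is a compactly supported element of $H^p(\Rn)$ with vanishing moments up to order $N_p$, and then apply Proposition~\ref{lemma:moment-log-control} to $g$ together with $\|g\|_{h^p}\leq \fC$. The only (minor) divergence is in how the $H^p$ ingredient is justified: you reprove the vanishing of moments of compactly supported $H^p$ distributions directly, using the global grand maximal function and the divergence of $\int_{|x|>R}|x|^{-p(n+|\alpha|)}dx$ for $|\alpha|\leq N_p$, whereas the paper simply pairs $h$ with the polynomial through the $H^p$--$\dot{\Lambda}_{n(1/p-1)}$ duality and notes that this pairing is zero, which makes your write-up slightly more self-contained but otherwise identical in substance.
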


\begin{proof}
Writing $\fM= g+ h$ as in Definition~\ref{pseudo-molecule}, since $h\in H^p(\Rn)$ satisfies vanishing moment conditions up the order $N_p$, we have 
$\langle h, \, (\cdot-x_0)^{\alpha} \rangle = 0$ (the pairing here is the one between $H^p$ and its dual space, the homogeneous Lipschitz space $\dot{\Lambda}_{n(1/p-1)}$).

For $g \in h^p(\Rn)$ supported in $B$, the moments $\langle g, \, (\cdot-x_0)^{\alpha} \rangle$ can be defined as in
Proposition \ref{lemma:moment-log-control}.  Thus we can set
$$\langle \fM, \, (\cdot-x_0)^{\alpha} \rangle: = \langle g, \, (\cdot-x_0)^{\alpha} \rangle + \langle h, \, (\cdot-x_0)^{\alpha} \rangle = \langle g, \, (\cdot-x_0)^{\alpha} \rangle.$$
If $\fM$ has an alternative decomposition $g' + h'$ satisfying the conditions of  Definition~\ref{pseudo-molecule}, then we must have that $g - g' \in H^p$ and therefore the moments of $g'$ are the same as those of $g$.

The estimates \eqref{estimate-molecule} now follow immediately from \eqref{eq:molecule-ball}. 
\end{proof}

\begin{proof}[Proof of Theorem~\ref{theorem:converse}]
Let $0<p\leq 1$ and $T$ be a linear and bounded operator on $h^p(\R^n)$ that maps each $(p,2)$ atom in $h^p(\Rn)$ into a pseudo-molecule centered in the same ball as the support of the atom.  

As \cite[Definition 5.1 and Proposition 5.2]{DLPV} rely on the specific form of the operators considered there, namely those with a nice kernel, we first need to make sense of the cancellation conditions \eqref{inhomogeneous-condition} in this more general context.  Fix $\alpha \in \Z_{+}^{n}$ with $|\alpha|\leq N_p$.  We want to show
$T^{\ast}\left[(\cdot-x_0)^{\alpha}\right]$ is well defined locally in the following sense.  

Fix a ball $B=B(x_0,r) \subset \R^n$ with $r<1$.  We will show that $T^{\ast}\left[(\cdot-x_0)^{\alpha}\right]$ can be identified with $f$ in $(L^{2}_{N_p}(B))^*$.  Here $L^{2}_{N_p}(B)$ denotes the space of functions in $L^2(B)$ with vanishing moments up to order $N_p$,  and its dual space  can be identified with the quotient of $L^2(B)$ by the subspace ${\mathcal P}_{N_p}$ of polynomials of order up to $N_p$.  We then have
\begin{equation}
\label{eq:duality}
\|f\|_{(L^{2}_{N_p}(B))^*} : = \sup_{\substack{\psi \in L^{2}_{N_p}(B) \\ \| \psi \|_{L^2(B)}\leq 1}} \left| \left\langle f, \, \psi \right\rangle \right|  = \inf_{P \in {\mathcal P}_{N_p}}\|f - P\|_{L^2(B)} = \|f - P_{B}^{N_p}(f)\|_{L^2(B)}, 
\end{equation}
where $P_{B}^{N_p}(f)$ is the  element of ${\mathcal P}_{N_p}$ with the same moments as $f$ over $B$ up to order $N_p$.

Given a  $\psi \in L^{2}_{N_p}(B)$ with $\|\psi\|_{L^2(B)} \leq 1$, let
$$
a(x) = \psi(x) \, |B|^{\frac{1}{2}-\frac{1}{p}}.
$$
Note that $a$ is a $(p,2)$ atom supported on $B$  (strictly speaking we have $\supp(a) \subset \overline{B}$ but in the calculation of the norm we may always take $\psi$ of compact support in $B$).  By the boundedness assumptions on $T$,   $\|Ta\|_{h^p(\Rn)} \lesssim \|a\|_{h^p(\Rn)} \leq C$ independent of $a$ and $\fM = Ta$ is a pseudo-molecule, where the choice of the constant $\fC$ in Definition~\ref{pseudo-molecule} should be consistent with the norm of $T$.
Thus by \eqref{estimate-molecule},
\begin{align*}
\left| \left\langle T^{\ast}[(\cdot-x_0)^{\alpha}], \, a \right\rangle \right| 
&: = \left|  \left\langle (\cdot-x_0)^{\alpha}, \, Ta \right\rangle \right|  \\
&\leq \left\{ \begin{array}{ll} 
		C_{\alpha,p} \,\fC &\quad \text{if } |\alpha|< \gamma_p, \\
		C_{\alpha,p} \, \fC \bigg[ \log\bigg(1+\frac{1}{r}\bigg)\bigg]^{-1/p} &\quad \text{if } |\alpha| = \gamma_p = N_p.
	\end{array} \right. 
\end{align*}
Replacing $a$ by $\psi$, we see that the left-hand-side defines a bounded linear functional $f \in (L^{2}_{N_p}(B))^*$ with
\begin{align*}
\left| \left\langle f, \, \psi \right\rangle \right| 
&= |B|^{\frac{1}{p}-\frac{1}{2}}\left| \left\langle T^{\ast}[(\cdot-x_0)^{\alpha}], \, a \right\rangle \right| 
&\leq \left\{ \begin{array}{ll} 
		C_{\alpha,p} |B|^{\frac{1}{p}-\frac{1}{2}}\,\fC &\quad \text{if } |\alpha|< \gamma_p, \\
		C_{\alpha,p} |B|^{\frac{1}{p}-\frac{1}{2}}\, \fC \bigg[ \log\bigg(1+\frac{1}{r}\bigg)\bigg]^{-1/p} &\quad \text{if } |\alpha| = \gamma_p = N_p.
	\end{array} \right. 
\end{align*}
Thus by \eqref{eq:duality}, we have
\begin{align*}
	\left( \fint_{B} |f-P_{N_p}(f)|^{2} \right)^{1/2} &= |B|^{-\frac{1}{2}}\, \left( \int_{B} |f-P_{N_p}(f)|^{2} \right)^{1/2}  \\
	&= |B|^{-\frac{1}{2}}\, \sup_{\substack{\psi \in L^{2}_{N_p}(B) \\ \| \psi \|_{L^2(B)}\leq 1}} \left| \left\langle f, \, \psi \right\rangle \right|\\
		&\leq \left\{ \begin{array}{ll} 
		C_{n,p} \, r^{\gamma_p} &\quad \text{if } |\alpha|< \gamma_p, \\
		C_{n,p} \, r^{\gamma_p} \bigg[ \log\bigg(1+\frac{1}{r}\bigg)\bigg]^{-1/p} &\quad \text{if } |\alpha| = \gamma_p = N_p.
	\end{array} \right. \\
&= C_{n,p} \; \Psi_{p,\alpha}(r).
\end{align*}
\end{proof}

While the pseudo-molecules described above are not restricted by any size or decay conditions, the name is motivated by our main example, the $h^p$ molecules defined   
 in \cite[Definition 3.5]{DLPV}.  As will be seen below, these
are pseudo-molecules, even when no cancellation is assumed. 

\begin{definition} 
\label{definition:pre-molecule}
	Let $0<p\leq 1 \leq s< \infty$ with $p\neq s$, $\lambda > n \left({s}/{p}-1\right)$, and $C > 0$. We say that a measurable function $M$ is a $(p,s,\lambda, C)$ pre-molecule in $h^p(\Rn)$ if there exist a ball $B(x_0,r) \subset \Rn$ and a constant $C>0$ such that
	$$
	\textnormal{M1.} \;\; \|M\|_{L^s(B)} \leq C \, r^{n\left(\frac{1}{s}-\frac{1}{p}\right)} \quad \textnormal{M2.} \;\; \| M \, |\cdot - x_0|^{\frac{\lambda}{s}} \,  \|_{L^{s}(B^c)} \leq C \, r^{\frac{\lambda}s + n \left(\frac{1}{s}-\frac{1}{p}\right)}.
	$$
\end{definition}

\begin{lemma} 
\label{example:premolecule}
Let $M$ be a $(p,s,\lambda,C)$ pre-molecule centered in $B(x_0,r) \subset \Rn$.  Then $M$ is a pseudo-molecule, with the constant $\fC$ in Definition~\ref{pseudo-molecule} depending on $\|M\|_{h^p}$ and $C$. 
\end{lemma}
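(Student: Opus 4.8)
The plan is to split $M = g + h$, where $g$ is an atom-scaled smooth bump supported in $B := B(x_0,r)$ carrying all the moments of $M$ up to order $N_p$, so that $h := M - g$ becomes a genuine $H^p$-molecule; membership $g \in h^p(\Rn)$ is then obtained for free from $g = M - h$. As a first step I would check that the moments $\mu_\beta := \int_{\Rn} M(x)(x-x_0)^\beta\,dx$, $|\beta|\le N_p$, are well defined with $|\mu_\beta| \lesssim C\,r^{|\beta|-\gamma_p}$. On $B$ this is Hölder's inequality together with M1; on $B^c$ one factors $|M(x)||x-x_0|^{|\beta|} = \big(|M(x)||x-x_0|^{\lambda/s}\big)\,|x-x_0|^{|\beta|-\lambda/s}$ and applies Hölder with M2, the tail integral $\int_{B^c}|x-x_0|^{(|\beta|-\lambda/s)s'}\,dx$ converging exactly because $\lambda > n(s/p-1)$ forces $|\beta|\le N_p\le \gamma_p < (\lambda+n)/s - n$; both pieces come out $\lesssim C\,r^{|\beta|-\gamma_p}$.

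Next I would fix, once and for all, smooth functions $\psi_\gamma \in C_c^\infty(B(0,1))$, $|\gamma|\le N_p$, dual to the monomials (i.e.\ $\int\psi_\gamma(y)y^\delta\,dy = \delta_{\gamma\delta}$ for $|\delta|\le N_p$, which exist by elementary linear algebra since $\psi\mapsto(\int\psi\,y^\delta)_{|\delta|\le N_p}$ is onto the finite-dimensional target), rescale to $\psi_\gamma^B(x) := r^{-n}\psi_\gamma((x-x_0)/r)$, and set $g := \sum_{|\gamma|\le N_p}\mu_\gamma\,r^{-|\gamma|}\,\psi_\gamma^B$. A change of variables gives $\int g(x)(x-x_0)^\delta\,dx = \mu_\delta$ for all $|\delta|\le N_p$, and $\|\psi_\gamma^B\|_{L^s}\lesssim r^{n/s-n}$ together with the moment bound yields $\|g\|_{L^s}\lesssim C\,r^{n(1/s-1/p)}$; moreover $\supp g\subset \overline B$, and, being a finite combination of $C_c^\infty$ functions, $g\in\mathcal{S}(\Rn)\subset h^p(\Rn)$.

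Then I would set $h := M - g$: it has vanishing moments up to order $N_p$ by construction; $\|h\|_{L^s(B)}\le\|M\|_{L^s(B)}+\|g\|_{L^s}\lesssim C\,r^{n(1/s-1/p)}$ from M1 and the bound on $g$; and, since $g$ lives in $B$, $\|\,|h|\,|\cdot-x_0|^{\lambda/s}\,\|_{L^s(B^c)} = \|\,|M|\,|\cdot-x_0|^{\lambda/s}\,\|_{L^s(B^c)}\le C\,r^{\lambda/s+n(1/s-1/p)}$ from M2. As $\lambda > n(s/p-1)$, this is precisely the size, decay and cancellation data of an $H^p$-molecule centred at $x_0$, so $h\in H^p(\Rn)$ with $\|h\|_{H^p}\lesssim C$ by the molecular theory for $H^p$ (\cite{Taibleson-Weiss}; see also the molecular decomposition in \cite{DLPV}). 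Finally, $M = g+h$ shows $M\in h^p(\Rn)$ (so $\|M\|_{h^p}$ is a legitimate finite quantity), and the ($p$-)quasi-triangle inequality with $\|h\|_{h^p}\le\|h\|_{H^p}$ gives $\|g\|_{h^p} = \|M - h\|_{h^p}\lesssim \|M\|_{h^p}+C$; hence $M = g+h$ exhibits $M$ as a pseudo-molecule associated to $B$, with $\fC$ depending only on $\|M\|_{h^p}$ and $C$.

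The step I expect to be the main obstacle is the claim in the third paragraph that M1--M2 with $\lambda>n(s/p-1)$, combined with full cancellation up to order $N_p$, amount to $h$ being a molecule in the precise sense of whichever $H^p$ molecular characterization is invoked: recovering the standard (product-form) molecular bound requires a Hölder interpolation between the estimates on $B$ and on $B^c$, and if the normalizations do not line up verbatim one must instead re-derive $h\in H^p$ directly from M1--M2 and the moment conditions via the usual atomic reduction. The remainder is routine scaling bookkeeping.
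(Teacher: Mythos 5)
Your proof is correct, but it takes a genuinely different route from the paper's. The paper's argument is essentially a citation: it applies \cite[Proposition 3.7]{DLPV} directly to $M$ (whose proof uses only the size conditions M1--M2, not any cancellation) to write $M=\sum_j c_j a_j + a_B$ with $a_j$ genuine $H^p$ atoms on the dilated balls $B(x_0,2^jr)$, $\sum|c_j|^p\lesssim 1$, and $a_B\in L^2(B)$; it then takes $h=\sum_j c_j a_j$, $g=a_B$, and bounds $\|g\|_{h^p}\lesssim \|M\|_{h^p}+C_{n,p,\lambda}$ by the (quasi-)triangle inequality. You instead first quantify the moments of $M$ (obtaining $|\mu_\beta|\lesssim C\,r^{|\beta|-\gamma_p}$), subtract an explicit bump-built correction $g$ supported in $B$ carrying exactly those moments, and then recognize $h=M-g$ as a classical $H^p$ molecule, invoking Taibleson--Weiss (or a direct annuli-plus-polynomial-correction atomic reduction) to get $\|h\|_{H^p}\lesssim C$; your final triangle-inequality step for $\|g\|_{h^p}$ coincides with the paper's, and indeed it cannot be avoided, since without cancellation no bound on $\|g\|_{h^p}$ uniform in $r$ is possible. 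Your worry about matching the Taibleson--Weiss normalization is easily dispatched: the annuli argument runs precisely under $\lambda>n(s/p-1)$, which is the threshold in Definition~\ref{definition:pre-molecule}, so the fallback you describe is the standard proof and closes the step. What your route buys: it is self-contained modulo classical $H^p$ molecular theory, works verbatim for every admissible $s$ (the paper's citation is phrased for the $s=2$ setting of \cite{DLPV}), produces explicit quantitative moment estimates for pre-molecules, and shows as a byproduct that a pre-molecule automatically lies in $h^p$, so $\|M\|_{h^p}$ is finite --- a point the paper's statement implicitly presupposes. What the paper's route buys is brevity and consistency with the molecular machinery already developed in \cite{DLPV}, which is reused in the proof of Corollary~\ref{corollary:converse-ICZOp}.
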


\begin{proof}
We may assume without loss of generality that $C  = 1$, allowing us to apply \cite[Proposition 3.7]{DLPV} to get the decomposition (in the sense of distributions)
		$$
		M = \sum_{j=1}^{\infty} c_j \, a_j \, + \, a_B,
		$$
		where each $a_j$ is a $(p,2)$ atom for $H^p(\Rn)$ (i.e.\ with full cancellation) supported in $B(x_0,2^{j}r)$, $\sum |c_j|^p \leq C_{n,p,\lambda}$, and 
		$a_B\in L^2(B)$.  
		
		By the atomic decomposition of $H^p$, we have $h = \sum_{j=1}^{\infty} c_j \, a_j \, \in H^p$ with $ \| h \|_{H^p}\lesssim C_{n,p,\lambda}$.  Moreover $g = a_B \in h^p$ and by the triangle inequality
$$
			\|g\|_{h^p} \leq \| M \|_{h^p} + \| h \|_{h^p} \lesssim \| M \|_{h^p} + C_{n,p,\lambda}.
$$
Thus $\fM = M$ satisfies the conditions of Definition~\ref{pseudo-molecule} with $\fC  \lesssim \| M \|_{h^p} + C_{n,p,\lambda}$.

It is important to note that since we are not assuming any cancellation conditions on $M$, hence none on $a_B$, we cannot conclude, as in \cite[Proposition 3.7]{DLPV} , that $\| M \|_{h^p} \lesssim 1$.		
\end{proof}

\begin{proof}[Proof of Corollary~\ref{corollary:converse-ICZOp}]
One direction follows from Lemma~\ref{example:premolecule} and Theorem~\ref{theorem:converse}: if $T:\mathcal{S}'(\Rn) \rightarrow \mathcal{S}'(\Rn)$  is bounded on $h^p(\Rn)$ and takes each $(p,2)$ atom to a pre-molecule centered in the same ball as the support of the atom, then it satisfies the hypotheses of the Theorem and the cancellation conditions \eqref{inhomogeneous-condition} hold.

For the converse we have to use results from \cite{DLPV}.  Suppose $T:\mathcal{S}'(\Rn) \rightarrow \mathcal{S}'(\Rn)$ is continuous.  If,  for some appropriate fixed constants $s,\lambda$ and $C$, $T$ takes each $(p,2)$ atom in $h^p(\Rn)$ to a $(p,s,\lambda,C)$ pre-molecule centered in the same ball as the support of the atom, and in addition it satisfies the cancellation conditions \eqref{inhomogeneous-condition}, then we want to show that it maps each $(p,2)$ atoms to a \textit{bona fide} molecule $M$ as in \cite[Definition 3.5]{DLPV}.  By \cite[Proposition 3.7]{DLPV} such a molecule will have $h^p$ norm bounded by a constant (depending on $s,\lambda$ and $C$), so the atomic decomposition and the continuity of $T$ on $\mathcal{S}'(\Rn)$ will give us the boundedness of $T$ on $h^p(\Rn)$.

Since the size conditions (M1) and (M2) in Definition~\ref{definition:pre-molecule} are identical to the ones in \cite[Definition 3.5]{DLPV}, it just remains to verify that the cancellation condition in the latter definition, (M3), holds for some $\omega$.  This follows from the cancellation conditions \eqref{inhomogeneous-condition} on $T$ in the same way as at the end of the proof of \cite[Theorem 5.3]{DLPV}.  That argument does not use the specific properties of $T$ besides the cancellation conditions and, of course, the definition of $T^{\ast}\left[(\cdot-x_0)^{\alpha}\right]$, which, as shown in the proof Theorem~\ref{theorem:converse} above,  is well defined precisely because $T$ takes atoms to pre-molecules, which are pseudo-molecules.  The constant $\omega$ in (M3) will depend on the constant $C$ in  \eqref{inhomogeneous-condition}.
\end{proof}

\begin{proof}[Proof of Theorem~\ref{theorem:thm2}]
 Assume $T: \mathcal{S}(\Rn) \rightarrow \mathcal{S}'(\Rn) $ is  a  strongly singular inhomogeneous Calder\'on--Zygmund operator.  This means it extends continuously from $L^2(\Rn)$ to itself, from $L^q(\Rn)$ to $L^2(\Rn)$, where
$$
\frac{1}{q} = \frac{1}{2} + \frac{\beta}{n}, \;\; \text{for some} \;\; \frac{n}{2}(1-\sigma) \leq \beta < \frac{n}{2}, \;\; 0<\sigma\leq 1,
$$
its distributional kernel $K$ agrees with a continuous function away of the diagonal in $\R^n \times \R^n$, and there exist $\mu>0$ and $0<\delta\leq 1$ such that 
\begin{equation} 
	\label{inhomogeneous-kernel}
	|K(x,y)| \leq C \ \min{\left\{ \frac{1}{|x-y|^n}, \ \frac{1}{|x-y|^{n+\mu}} \right\}}, \;\; \text{for} \;\; x \neq y,
\end{equation} 
and
\begin{equation} \label{holder-condition}
	|K(x,y)-K(x,z)|+|K(y,x)-K(z,x)| \leq C \dfrac{|y-z|^{\delta}}{|x-z|^{n+ \frac{\delta}{\sigma}}}
\end{equation}
whenever $|x-z| \geq 2|y-z|^{\sigma}$.
The size conditions \eqref{inhomogeneous-kernel} and \eqref{holder-condition} on the kernel, together with the boundedness assumptions on $T$, with no further cancellation assumption, imply that if $a$ is a $(p,2)$ atom in $h^p(\Rn)$, then $Ta$ satisfies the size conditions of a molecule in $h^p(\Rn)$, namely (M1) and (M2) in Definition~\ref{definition:pre-molecule}, as shown in the proofs of \cite[Theorem 5.3 and 5.8]{DLPV}).  The desired result is then a consequence of Corollary~\ref{corollary:converse-ICZOp}.
\end{proof}

\end{document}